\newtheorem{Th}{Theorem}[section]
\newtheorem{Lemma}[Th]{Lemma}
\theoremstyle{definition}
\newtheorem{Def}[Th]{Definition}
\theoremstyle{remark}
\newtheorem{Obs}{Remark}
\newcommand{\lra}{\longrightarrow}
\newcommand{\C}{\mathbb{C}}
\newcommand{\N}{\mathbb{N}}
\newcommand{\Po}{\mathcal{P}}
\newcommand{\rd}{\mathrm{d}}
\begin{document}
\title[Orthogonally additive polynomials in $\ell_p$.]{On the representation
of orthogonally additive polynomials in $\ell_p$.}
\author{Alberto Ibort}
\address{Departamento de Matem{\'a}ticas\\ Universidad Carlos III de Madrid\\
Avda. de la Universidad 30\\  28911 Legan{\'e}s\\  Spain}
\email{albertoi@math.uc3m.es}
\thanks{The first author was supported in part by Project MTM 2004-07090-C03}
\author{Pablo Linares}
\address{Departamento de An{\'a}lisis Matem{\'a}tico\\
Facultad de Matem{\'a}ticas\\ Universidad Complutense de Madrid\\
28040 Madrid\\ Spain.}
\email{plinares@mat.ucm.es}
\thanks{The second author was partially supported by the "Programa de formaci{\'o}n
del profesorado universitario del MEC".}

\author{Jos{\'e} G. Llavona}
\address{Departamento de An{\'a}lisis Matem{\'a}tico\\
Facultad de Matem{\'a}ticas\\ Universidad Complutense de Madrid\\
28040 Madrid\\  Spain.}
\email{JL\_Llavona@mat.ucm.es}
\thanks{The second and third author were supported in part by Project
MTM 2006-03531}

\subjclass[2000]{Primary 46G25; Secondary 46B42, 46M05}



\keywords{Orthogonally additive polynomials. Tensor diagonal}

\begin{abstract}
We present a new proof of a  Sundaresan's result which shows that
the space of orthogonally additive polynomials $\Po_o(^k\ell_p)$
is isometrically isomorphic to $\ell_{p/p-k}$ if $k<p<\infty$ and
to $\ell_\infty$ if $1\leq p\leq k$.
\end{abstract}
\maketitle

\section{Introduction}

A continuous scalar-valued map $P$ in a Banach space is called a
$k$-homogeneous polynomial if there exists a continuous $k$-linear
form $\phi$ on $X$ such that $P(x)=\phi(x,\dots,x)$. The space of
$k$-homogeneous polynomials on $X$ will be denoted by $\Po(^kX)$.
It is a Banach space with the norm
    $$\|P\|=\sup_{\|x\|\leq 1}|P(x)|$$

We will denote by $\widehat{\bigotimes}_{\pi,k}X$ and by
$\widehat{\bigotimes}_{\pi,s,k}X$ the completed $k$-fold
projective tensor product and the completed $k$-fold projective
symmetric tensor product res\-pec\-ti\-ve\-ly, where $\pi$ denotes
the projective norm. For $X=\ell_p$, the tensor diagonal is
defined to be the closed subspace of
$\widehat{\bigotimes}_{\pi,s,k}X$ generated by
$e_n\otimes\overbrace{\cdots}^k\otimes e_n$. It will be denoted by
$D_{k,p}$.

It will be needed the well known result that the dual of the space
$\widehat{\bigotimes}_{\pi,s,k}X$ is isometrically isomorphic to
the space $\Po(^kX)$. For notations and results about homogeneous
polynomials, the reader is referred to \cite{Dineen} or
\cite{Mujica}, and for the theory of tensor products to
\cite{Ryan}.

We are interested in the subspace of $\Po(^kX)$ consisting of all
orthogonally additive polynomials. Recall that if $X$ is a Banach
lattice, a $k$-homogeneous polynomial $P$ is said to be
orthogonally additive if $P(x+y)=P(x)+P(y)$ whenever $x$ and $y$
are orthogonal or disjoint elements of $X$ (that is
$|x|\wedge|y|=0$). We will consider $X=\ell_p$ as a Banach lattice
with its natural order given by $x=(x_n)\leq y=(y_n)$ if $x_n\leq
y_n$. See \cite{Lind-Tzafriri} for more information about the
theory of Banach lattices.

We give here a new proof of a result of Sundaresan (see
\cite{Sundaresan}). This result has been recently generalized to
all Banach lattices by Benyamini, Lasalle and Llavona in
\cite{BLLl}. There are also independent proofs for the case of
$X=C(K)$, see \cite{CLZ} and \cite{PerVill}. The reason to present
this new proof is that, in our opinion, it is much simpler in the
sense that there is no need of hard tools such us the
representation of orthogonally additive functionals using
Caratheodory functions, as in \cite{Sundaresan}, or the Kakutani
representation theorem for Banach lattices used in \cite{BLLl}.
This new proof also makes more evident the underlying ideas.

\section{The tensor diagonal}

In this section we generalize Example 2.23 in \cite{Ryan} to give
a description of the tensor diagonal in
$\widehat{\bigotimes}_{\pi,s,k}\ell_p$. The main technique was a
Rademacher averaging (\cite{Ryan}, Lema 2.22). Its generalization
requires the $k$-Rademacher generalized functions introduced by
Aron and Globevnik in \cite{AG}:

\begin{Def}[\cite{AG}]\label{RadGen}
Fix $k\in \N$, $k\geq 2$ and let
$\alpha_1=1,\alpha_2,\dots,\alpha_k$ denote the $n^{th}$ roots of
unity. Let $r_1:[0,1]\lra \C$ be the step function taking the
value $\alpha_j$ on $(j-1/k,j/k)$ for $j=1,\dots,n$. Assuming that
$r_{n-1}$ has been defined, define $r_n$ in the following way: fix
any of the $k_{n-1}$ sub-intervals $I$ of $[0,1]$ used in the
definition of $r_{n-1}$. Divide $I$ into $k$ equal intervals
$I_1,\dots,I_k$ and set $r_n(t)=\alpha_j$ if $t\in I_j$.
\end{Def}

We will also need the following

\begin{Lemma}[\cite{AG}]\label{LemaAG}
For each $k=2,3,\dots$ the associated functions $r_n$ satisfy the
following properties:

\begin{itemize}
\item $|r_n(t)|=1$ for all $n\in\N$ and all $t\in[0,1]$.
\item For any choice of $n_1,\dots,n_k$

\begin{displaymath}
\int_0^1 r_{n_1}(t)\cdots r_{n_k}(t)= \left\{ \begin{array}{l}
1  \textrm{ if $n_1=\dots=n_k$} \\
0 \textrm{ otherwise.}
\end{array}\right.
\end{displaymath}
\end{itemize}
\end{Lemma}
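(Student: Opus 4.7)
My plan is to handle the two bullet points separately. The first is essentially immediate: by construction each $r_n$ takes its values among the $k$-th roots of unity $\alpha_1,\ldots,\alpha_k$, all of which have modulus one, so $|r_n(t)|=1$ for every $t \in [0,1]$.

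For the orthogonality integral, the first step I would take is to collect equal indices. Let $m_1 < m_2 < \cdots < m_s$ denote the distinct values appearing among $n_1,\ldots,n_k$, with multiplicities $c_1,\ldots,c_s$ satisfying $c_1 + \cdots + c_s = k$, so that the integrand becomes $r_{m_1}^{c_1}\cdots r_{m_s}^{c_s}$. If $s=1$, the product reduces to $r_{m_1}^k$, which is identically $1$ because every value of $r_{m_1}$ is itself a $k$-th root of unity; hence the integral equals $1$, as required.

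When $s \geq 2$, the key observation---built into the nested construction of Definition \ref{RadGen}---is that any $r_{m_i}$ with $m_i < m_s$ is constant on each of the sub-intervals $I$ used to define $r_{m_s}$, since those intervals refine every earlier partition. Factoring out these constant values on such an $I$ leaves
\[
\int_I r_{m_s}^{c_s}(t)\, \rd t \;=\; \frac{|I|}{k}\sum_{j=1}^{k}\alpha_j^{c_s},
\]
because $I$ is subdivided into $k$ equal parts on which $r_{m_s}$ takes each value $\alpha_j$ exactly once. The assumption $s\geq 2$ forces $1 \leq c_s \leq k-1$, so $c_s$ is not a multiple of $k$ and the character sum $\sum_{j=1}^{k}\alpha_j^{c_s}$ vanishes. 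Summing over all $I$ in the level-$m_s$ partition yields $\int_0^1 r_{n_1}(t)\cdots r_{n_k}(t)\,\rd t = 0$.

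The only thing to be careful about is the bookkeeping: one needs to single out the \emph{strictly} largest index $m_s$ in order to secure simultaneously that all other factors are constant on the relevant sub-intervals \emph{and} that $c_s < k$. Beyond this organizational point, the entire argument rests on the standard vanishing of nontrivial character sums over the $k$-th roots of unity, so I expect no real obstacle.
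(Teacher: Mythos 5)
Your proof is correct: collecting repeated indices, isolating the strictly largest index $m_s$, using that every lower-index factor is constant on each interval of the partition from which $r_{m_s}$ is built, and invoking the vanishing of the character sum $\sum_{j=1}^{k}\alpha_j^{c_s}$ for $1\leq c_s\leq k-1$ (together with $\alpha_j^k=1$ in the case $s=1$) settles both bullets. Note that the paper itself offers no proof of this lemma --- it is quoted from Aron and Globevnik \cite{AG} --- so there is no internal argument to compare against, but your reasoning is complete and is essentially the standard one from that reference.
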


The next lemma is a generalization of Lemma 2.22 in \cite{Ryan}:

\begin{Lemma}[Rademacher Averaging]\label{RadAver}
Let $X_1,\dots X_k$ vector spaces and let $x_{1,1},\dots,x_{1,n}
\in X_1,\dots,$ and $x_{k,1},\dots,x_{k,n} \in X_k$. Then
    $$\sum_{i=1}^nx_{1,i}\otimes\dots\otimes x_{k,i}=\int_0^1
    \left(\sum_{i=1}^nr_i(t)x_{1,i}\right)\otimes\cdots
    \otimes\left(\sum_{i=1}^nr_i(t)x_{k,i}\right)\rd t$$
\end{Lemma}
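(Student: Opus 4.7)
The plan is to expand the right-hand side by multilinearity of the tensor product and then apply the orthogonality property in Lemma \ref{LemaAG}. Since each $r_n$ is a step function and the sum runs only up to $n$, the integrand of the right-hand side is a step function valued in the algebraic tensor product $X_1 \otimes \cdots \otimes X_k$ (constant on each sub-interval of a sufficiently fine partition of $[0,1]$). Hence the integral is unambiguously defined as the obvious finite linear combination of simple tensors, with no Banach-space structure needed.

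First, I would use the multilinearity of $\otimes$ to rewrite the integrand as
$$\left(\sum_{i_1=1}^n r_{i_1}(t)x_{1,i_1}\right)\otimes\cdots\otimes\left(\sum_{i_k=1}^n r_{i_k}(t)x_{k,i_k}\right) = \sum_{i_1,\dots,i_k=1}^n r_{i_1}(t)\cdots r_{i_k}(t)\, x_{1,i_1}\otimes\cdots\otimes x_{k,i_k}.$$
Next, since this is a finite sum and the tensors $x_{1,i_1}\otimes\cdots\otimes x_{k,i_k}$ do not depend on $t$, I would interchange the sum and the integral to obtain
$$\int_0^1\!\!\!\sum_{i_1,\dots,i_k=1}^n r_{i_1}(t)\cdots r_{i_k}(t)\, x_{1,i_1}\otimes\cdots\otimes x_{k,i_k}\,\rd t = \sum_{i_1,\dots,i_k=1}^n\!\Bigl(\!\int_0^1 r_{i_1}(t)\cdots r_{i_k}(t)\,\rd t\Bigr)x_{1,i_1}\otimes\cdots\otimes x_{k,i_k}.$$

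Finally, I would apply the second property in Lemma \ref{LemaAG}: the scalar integral $\int_0^1 r_{i_1}(t)\cdots r_{i_k}(t)\,\rd t$ vanishes unless $i_1=\cdots=i_k$, in which case it equals $1$. Only the diagonal terms $i_1=\cdots=i_k=i$ survive, and the right-hand side collapses to $\sum_{i=1}^n x_{1,i}\otimes\cdots\otimes x_{k,i}$, which is exactly the left-hand side.

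There is no real obstacle: the proof is a direct generalization of the bilinear Rademacher trick in \cite{Ryan} to $k$ factors, where the classical signs $\pm 1$ are replaced by the $k$-th roots of unity. The key ingredient is precisely the orthogonality relation in Lemma \ref{LemaAG}, which is designed exactly so that this expansion-and-integrate argument isolates the diagonal terms.
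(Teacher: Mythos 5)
Your proposal is correct and follows exactly the same route as the paper, whose entire proof reads ``Just expand the integral and use the second property of Lemma \ref{LemaAG}''; you have simply written out the multilinear expansion and the sum--integral interchange that the authors leave implicit. Nothing further is needed.
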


\begin{proof}
Just expand the integral and use the second property of Lemma
\ref{LemaAG}.
\end{proof}

\begin{Th}\label{Tensordiagonal}
Let $1\leq p < \infty$. The tensor diagonal $D_{k,p}$ in
$\widehat{\bigotimes}_{\pi,s,k}\ell_p$ is iso\-me\-tri\-ca\-lly
isomorphic to $\ell_{p/k}$ if $k<p<\infty$ and to $\ell_{1}$ if
$1\leq p\leq k$.
\end{Th}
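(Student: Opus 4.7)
The plan is to show that the natural map $T\colon (a_n) \mapsto \sum_n a_n\, e_n^{\otimes k}$, sending $\ell_{p/k}$ (when $k<p<\infty$) or $\ell_1$ (when $1\leq p\leq k$) into $D_{k,p}$, is a surjective isometry. Surjectivity is automatic from the definition of $D_{k,p}$ once $T$ is known to be an isometric embedding, because the image of $T$ contains the generating set $\{e_n^{\otimes k}\}_{n\in\N}$ and isometric images of Banach spaces are closed.

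For the upper bound in the regime $k<p<\infty$, I would apply the Rademacher Averaging Lemma \ref{RadAver}. Pick $c_n\in\C$ with $c_n^k=a_n$ and $|c_n|=|a_n|^{1/k}$, so that $a_n e_n^{\otimes k}=(c_n e_n)^{\otimes k}$; Lemma \ref{RadAver} then yields
\begin{equation*}
\sum_{n=1}^N a_n e_n^{\otimes k}=\int_0^1 \Bigl(\sum_{n=1}^N r_n(t)c_n e_n\Bigr)^{\otimes k}\rd t.
\end{equation*}
Because $|r_n(t)|=1$, the integrand is a symmetric tensor power of an $\ell_p$ vector of norm $\|(a_n)_{n\leq N}\|_{\ell_{p/k}}^{1/k}$, uniformly in $t$, so passing $\|\cdot\|_{\pi,s}$ inside the integral gives $\|\sum_{n=1}^N a_n e_n^{\otimes k}\|_{\pi,s}\leq \|(a_n)_{n\leq N}\|_{\ell_{p/k}}$. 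Letting $N\to\infty$ simultaneously produces convergence in $D_{k,p}$ and the desired bound. In the case $1\leq p\leq k$, the triangle inequality together with $\|e_n^{\otimes k}\|_{\pi,s}=1$ trivially gives $\|T((a_n))\|_{\pi,s}\leq \sum_n|a_n|$.

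For the matching lower bounds I invoke the duality $(\widehat{\bigotimes}_{\pi,s,k}\ell_p)^*\cong \Po(^k\ell_p)$ recalled in the introduction, and evaluate against diagonal polynomials $P_\beta(x)=\sum_n\beta_n x_n^k$. When $k<p$, Hölder with conjugate exponents $p/k$ and $p/(p-k)$ yields $\|P_\beta\|\leq\|(\beta_n)\|_{\ell_{p/(p-k)}}$; when $p\leq k$, the pointwise estimate $\sum_n|x_n|^k\leq\|x\|_p^k$ yields $\|P_\beta\|\leq\sup_n|\beta_n|$. Since the pairing is $\langle T((a_n)),P_\beta\rangle=\sum_n a_n\beta_n$, taking the supremum over the admissible $(\beta_n)$ and using the standard $\ell_{p/k}$--$\ell_{p/(p-k)}$ (respectively $\ell_1$--$\ell_\infty$) duality recovers $\|(a_n)\|_{\ell_{p/k}}$ (respectively $\|(a_n)\|_{\ell_1}$), completing the isometry.

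The main delicacy is the upper bound when $k<p$: the naive representation $u=\sum_n a_n e_n^{\otimes k}$ only controls $\|u\|_{\pi,s}$ by $\sum_n|a_n|$, and the Rademacher averaging is precisely what is needed to repackage the sum of rank-one diagonal tensors as a single integral of rank-one tensors whose $\ell_p$-norms scale as $\|(a_n)\|_{\ell_{p/k}}^{1/k}$. Once that representation is in hand, every other step reduces to standard tensor-product and duality manipulations.
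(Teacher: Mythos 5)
Your proposal is correct and follows essentially the same route as the paper: the upper bound via the generalized Rademacher averaging of Lemma \ref{RadAver} applied to $\sum_n |a_n|^{1/k} r_n(t) e_n$, and the lower bound by pairing against a diagonal element of the dual (you use the polynomial $P_\beta(x)=\sum_n\beta_n x_n^k$ and optimize over $\beta$, while the paper plugs in the single extremal multilinear form $B$ with $b_i=\mathrm{sign}(a_i)|a_i|^{p/k-1}$ --- the same H\"older duality in both cases). The only cosmetic difference is that you place a $k$-th root $c_n$ of $a_n$ symmetrically in every tensor factor (fine over $\C$, which the roots-of-unity construction already presupposes), whereas the paper puts $\mathrm{sign}(a_i)|a_i|^{1/k}$ in the first factor only; you also make explicit the density/surjectivity point that the paper leaves implicit.
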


\begin{proof} $\ $\\

\begin{enumerate}
\item $k<p<\infty$. \\
Let $u=\sum_{i=0}^n a_i e_i\otimes\dots\otimes e_i\in D_{k,p}$.
Using Lemma \ref{RadAver}, we write
    $$u=\int_0^1
    \left(\sum_{i=1}^n sign(a_i)|a_i|^{1/k}r_i(t)e_i\right)\otimes\cdots
    \otimes\left(\sum_{i=1}^n |a_i|^{1/k}r_i(t)e_{i}\right)\rd t$$
and, like in \cite{Ryan} (pages 34-35) we get:

\begin{eqnarray}
\pi(u) &\leq & \sup_{0\leq t\leq 1}
    \left\|\sum_{i=1}^n sign(a_i)|a_i|^{1/k}r_i(t)e_i \right\|_p
    \cdots \left\|\sum_{i=1}^n |a_i|^{1/k}r_i(t)e_{i}\right\|_p={}
                        \nonumber\\
    & &= \left( \sum_{i=1}^{n}|a_i|^{p/k}\right)^{k/p}=\|(a_i)\|_{p/k}
                        \nonumber
\end{eqnarray}

To prove the identity, define a $k$-linear form on $\ell_p$ by
$B(x_1,\dots,x_n)=\sum b_i x_{1,i}\cdots x_{k,i}$ where
$x_j=(x_{j,n})$ and $b_i=sign(a_i)|a_i|^{p/k-1}$. Using H{\"o}lder's
inequality, it is easy to see that $\|B\|\leq
(\sum_{i=1}^n|a_i|^{p/k})^{1-p/k}$ and then
    $$\sum_{i=1}^n|a_i|^{p/k}=|\langle u,B\rangle|\leq \pi(u)(\sum_{i=1}^n|a_i|^{p/k})^{1-p/k}.$$

Hence $\|(a_i)\|_{p/k}\leq \pi(u)$ and therefore $D_{k,p}$ is
isometrically isomorphic to $\ell_{p/k}$.

\item $1\leq p\leq k$. \\
Let be $u=\sum_{i=0}^n a_i e_i\otimes\dots\otimes e_i\in D_{k,p}$,
then $\pi(u)\leq \sum_{i=0}^n |a_i|$. Re\-ci\-pro\-ca\-lly, define
$B(x_1,\dots,x_n)=\sum_{i=0}^n sign(a_i)x_{1,i}\cdots x_{k,i}$, we
have that $|B(x_1,\dots,x_n)|\leq \|x_1\|_k\cdots\|x_k\|_k\leq
\|x_1\|_p\cdots\|x_k\|_p$ and so $\|B\|\leq 1$. Then
    $$\pi(u)\geq \langle u,B\rangle=\sum_{i=1}^\infty |a_i|$$
and we are done.
\end{enumerate}
\end{proof}

\begin{Obs}
Definition \ref{RadGen} gives the classical Rademacher functions
for the case $k=2$ and these results are those in \cite{Ryan},
(Example 2.23, page 34).
\end{Obs}
\section{The main result}

Our proof is based in the fact that the orthogonally additive
polynomials are isometrically isomorphic to the dual of the tensor
diagonal $D_{k,p}$. We need a previous lemma:

\begin{Lemma}
Let $1\leq p<\infty$. The dual of the tensor diagonal $D_{k,p}^*$
is iso\-me\-tri\-cally isomorphic to $\ell_\infty$ if $1\leq p\leq
k$ and to $\ell_{p/p-k}$ if $k<p<\infty$ in the sense that for
every $F\in D_{k,p}^*$, $(F(e_i\otimes\dots\otimes e_i))$ is in
$\ell_\infty$ for the first case and in $\ell_{p/p-k}$ for the
second.
\end{Lemma}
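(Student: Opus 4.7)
The plan is to deduce this lemma directly from Theorem \ref{Tensordiagonal}, which identifies $D_{k,p}$ isometrically with either $\ell_{p/k}$ (for $k<p<\infty$) or $\ell_1$ (for $1\leq p\leq k$). Tracing through the proof of that theorem, the isometric isomorphism sends the tensor $e_i\otimes\cdots\otimes e_i\in D_{k,p}$ to the canonical basis vector $e_i$ of the corresponding sequence space. The lemma should then follow by taking duals and using the standard identifications $\ell_1^*\cong\ell_\infty$ and $\ell_q^*\cong\ell_{q/(q-1)}$ for $1<q<\infty$.

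More precisely, I would proceed as follows. First, invoke Theorem \ref{Tensordiagonal} to obtain the isometric isomorphism $T\colon D_{k,p}\to\ell_q$, where $q=p/k$ if $k<p$ and $q=1$ if $1\leq p\leq k$, with $T(e_i\otimes\cdots\otimes e_i)=e_i$. Dualizing gives an isometric isomorphism $T^*\colon\ell_q^*\to D_{k,p}^*$. Composing with the canonical identification of $\ell_q^*$ with $\ell_{q'}$ (where $q'=\infty$ if $q=1$, and $q'=q/(q-1)=p/(p-k)$ if $q=p/k>1$), every functional $F\in D_{k,p}^*$ corresponds to a unique sequence $(a_i)\in\ell_{q'}$ via
$$F(e_i\otimes\cdots\otimes e_i) = a_i \quad \text{for all } i\in\N,$$
and the $\ell_{q'}$-norm of $(a_i)$ equals $\|F\|$.

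For the case $k<p<\infty$, one just checks the arithmetic: if $q=p/k$, then $q'=q/(q-1)=(p/k)/((p-k)/k)=p/(p-k)$, as required. For the case $1\leq p\leq k$, we have $q=1$ so $q'=\infty$, giving the stated $\ell_\infty$.

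There is no real obstacle here, as the entire content was already absorbed into Theorem \ref{Tensordiagonal}; the only point meriting a line of verification is that the isomorphism supplied by that theorem is indeed implemented on the generating vectors $e_i\otimes\cdots\otimes e_i$ by the identification with the canonical basis of $\ell_{p/k}$ (resp.\ $\ell_1$), so that evaluation of $F\in D_{k,p}^*$ on these vectors produces precisely the sequence of coefficients of the corresponding element of $\ell_{q'}$.
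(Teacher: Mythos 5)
Your proposal is correct and follows essentially the same route as the paper: the paper's proof likewise reduces the lemma to the identification of the projective norm from Theorem \ref{Tensordiagonal} together with the standard duality $\ell_q^*\cong\ell_{q'}$ (with $\ell_1^*\cong\ell_\infty$ in the case $1\leq p\leq k$). Your version is merely a little more explicit in writing the isomorphism as $T^*$ applied to the canonical duality, which is a fair unpacking of the paper's one-line argument.
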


\begin{proof}
The proof is standard, observe that $\ell_{p/p-k}$ is the dual of
$\ell_{p/k}$ and carry on the same proof of $\ell_q^*=\ell_{q'}$
for $1/q+1/q'=1$ with the identification of the projective norm
shown in Theorem \ref{Tensordiagonal}.
\end{proof}

We are now ready to prove the theorem:

\begin{Th}\label{MainT}
Let $1\leq p<\infty$. The space of orthogonally additive,
k-ho\-mo\-ge\-neous polynomials $\Po_o(^k\ell_p)$ is isometrically
isomorphic to $\ell_\infty$ for $1\leq p\leq k$ and to
$\ell_{p/p-k}$ for $k<p<\infty$.
\end{Th}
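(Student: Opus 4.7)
The plan is to establish an isometric isomorphism $\Phi\colon \Po_o(^k\ell_p)\to D_{k,p}^*$; once this is done, the preceding Lemma immediately yields the identification with $\ell_\infty$ (if $1\le p\le k$) or with $\ell_{p/(p-k)}$ (if $k<p<\infty$). Using the duality $\Po(^k\ell_p)\cong (\widehat{\bigotimes}_{\pi,s,k}\ell_p)^*$, I write $L_P$ for the linear form on $\widehat{\bigotimes}_{\pi,s,k}\ell_p$ associated with $P$ and set $\Phi(P):=L_P|_{D_{k,p}}$. Linearity of $\Phi$ and the contraction $\|\Phi(P)\|\le \|L_P\|=\|P\|$ are immediate from the fact that $\Phi$ is a restriction.

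Injectivity of $\Phi$ is exactly where orthogonal additivity enters. If $\Phi(P)=0$, then $P(e_i)=L_P(e_i^{\otimes k})=0$ for every $i\in\N$. Applying orthogonal additivity iteratively to the pairwise disjoint summands $x_1 e_1,\dots,x_n e_n$ and using $k$-homogeneity yields $P\bigl(\sum_{i=1}^n x_i e_i\bigr)=\sum_{i=1}^n x_i^k P(e_i)=0$ for every finitely supported $x$; continuity of $P$ together with density of $c_{00}$ in $\ell_p$ then forces $P\equiv 0$.

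For surjectivity, take $F\in D_{k,p}^*$ and set $b_i:=F(e_i^{\otimes k})$; by the Lemma $(b_i)\in \ell_\infty$ when $p\le k$ and $(b_i)\in \ell_{p/(p-k)}$ when $k<p$, with sequence norm equal to $\|F\|$. I define explicitly $P(x):=\sum_{i=1}^\infty b_i x_i^k$ for $x=(x_i)\in\ell_p$. The main obstacle is verifying that this $P$ actually belongs to $\Po_o(^k\ell_p)$ with $\|P\|\le \|(b_i)\|$: when $k<p$ this is H\"older's inequality with exponents $p/(p-k)$ and $p/k$, while when $p\le k$ it follows from $|P(x)|\le \|(b_i)\|_\infty \sum_i|x_i|^k$ combined with the embedding $\|x\|_k\le \|x\|_p$; orthogonal additivity is transparent from the formula since $|x|\wedge|y|=0$ forces $(x_i+y_i)^k=x_i^k+y_i^k$ termwise. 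By construction $\Phi(P)$ and $F$ coincide on each diagonal generator $e_i^{\otimes k}$, hence on all of $D_{k,p}$ by linearity and continuity. Closing the chain $\|P\|\le \|(b_i)\|=\|F\|=\|\Phi(P)\|\le \|P\|$ forces equality throughout, so $\Phi$ is a surjective isometry and the theorem follows from the Lemma.
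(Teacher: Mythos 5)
Your proof is correct, and it follows the same overall strategy as the paper: identify $\Po_o(^k\ell_p)$ isometrically with $D_{k,p}^*$ and then invoke the preceding Lemma describing $D_{k,p}^*$. The difference lies in the direction in which the correspondence is built and in which verifications are made explicit. The paper starts from $F\in D_{k,p}^*$, extends it to a functional $\widetilde{F}$ on all of $\widehat{\bigotimes}_{\pi,s,k}\ell_p$ by declaring it zero off the diagonal, checks via H\"older's inequality that $\|\widetilde{F}\|=\|F\|$, and reads off the polynomial $P(x)=\widetilde{F}(x\otimes\dots\otimes x)$; it does not explicitly verify that \emph{every} orthogonally additive polynomial arises this way. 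You instead define the map by restriction, $\Phi(P)=L_P|_{D_{k,p}}$, and prove bijectivity; your injectivity step --- orthogonal additivity plus homogeneity force $P(\sum_i x_ie_i)=\sum_i x_i^kP(e_i)$ on finitely supported vectors, so $P$ is determined by its diagonal values --- is precisely the surjectivity of the paper's map, which the paper leaves implicit. Your surjectivity argument, with the explicit formula $P(x)=\sum_i b_ix_i^k$ and the same H\"older estimate, is the paper's construction of $\widetilde{F}$ in concrete form and makes the orthogonal additivity of $P$ transparent. The only small point worth adding is a remark that $P(x)=\sum_i b_ix_i^k$ is indeed a $k$-homogeneous polynomial, i.e.\ comes from the continuous $k$-linear form $(x^{(1)},\dots,x^{(k)})\mapsto\sum_i b_i x^{(1)}_i\cdots x^{(k)}_i$, which is bounded by the same (generalized) H\"older estimate you already use on the diagonal.
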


\begin{proof}
The proof consists on showing that $\Po_o(^k\ell_p)$ is
isometrically isomorphic to $D_{k,p}^*$. We will suppose that
$k<p<\infty$. The other case is analogous.

Let $F\in D_{k,p}^*$, the correspondence is established by
associating $F$ to the polynomial
$P(x)=\widetilde{F}(x\otimes\dots\otimes x)$ where $\widetilde{F}
\in (\widehat{\bigotimes}_{\pi,k,s}\ell_p)^*$ is defined by
\begin{displaymath}
\widetilde{F}(e_{n_1}\otimes\dots\otimes e_{n_k})= \left\{
\begin{array}{l}
F(e_{n_1}\otimes\dots\otimes e_{n_k})  \textrm{ if $n_1=\dots=n_k$} \\
0 \textrm{ otherwise}
\end{array}\right.
\end{displaymath}

To see that $\widetilde{F}$ is well defined let $x=\sum x_ne_n\in
\ell_p$, then

   \begin{eqnarray}
|\widetilde{F}(x\otimes\dots\otimes x)| &\leq & \sum
|x_{n_1}\cdots x_{n_k} \widetilde{F}(e_{n_1}\otimes\dots\otimes
e_{n_k})|= \sum | x_n^k F(e_{n}\otimes\dots\otimes e_{n})| {}
                        \nonumber\\
    & & \leq \|(x_n^k)\|_{p/k}\|(F(e_{n}\otimes\dots\otimes
    e_{n}))\|_{p/p-k}\leq
    \|F\|\|x\|_p^k={}
                        \nonumber\\
    & & =\|F\|\pi(x\otimes\dots\otimes x)
                        \nonumber
\end{eqnarray}
hence $\widetilde{F}$ is continuous and $\|\widetilde{F}\|\leq
\|F\|$, then we have the equality since $\widetilde{F}$ was an
extension of $F$.

To see that $P(x)=\widetilde{F}(x\otimes\dots\otimes x)$ is
orthogonally additive, note that is enough to check that the
$k$-linear symmetric form $\phi$ associated to $P$, verifies that
$\phi(e_{n_1},\dots, e_{n_k})$ is zero whenever at least two of
its entries will be different and this is true by definition of
$\widetilde{F}$.

Finally as the correspondence between polynomials and the dual of
the sym\-me\-tric tensor product is an isometric isomorphism,
$\|P\|=\|\widetilde{F}\|=\|F\|$ which completes the proof.
\end{proof}

As it has been shown, if $P(x)=\phi(x,\dots,x)$ is an orthogonally
additive polynomial on $\ell_p$, the essential information of $P$
is contained in the sequence $(\phi(e_n,\dots,e_n))$. There is
another possible proof of Theorem \ref{MainT} using a result by
Zalduendo (see Corollary 1 of \cite{Zalduendo}):

\begin{Lemma}[\cite{Zalduendo}]
Let $k<p$ and let $\phi$ a continuous $k$-linear form on $\ell_p$.
Then
    $$(\phi(e_n,\dots,e_n))\in \ell_{p/p-k}.$$
\end{Lemma}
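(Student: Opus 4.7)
The plan is to prove this by duality. Setting $a_n := \phi(e_n,\dots,e_n)$, I aim to establish
$$\Big|\sum_{i=1}^n \lambda_i\, a_i\Big| \le \|\phi\|\, \|\lambda\|_{p/k}$$
for every finitely supported scalar sequence $\lambda=(\lambda_i)$. Since $k<p$ gives $p/k>1$, the dual of $\ell_{p/k}$ is $\ell_{p/(p-k)}$, and this inequality immediately forces $(a_n)\in \ell_{p/(p-k)}$ with $\|(a_n)\|_{p/(p-k)}\le\|\phi\|$.

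The main step reuses the Rademacher averaging already in play in Theorem \ref{Tensordiagonal}(1), read this time in the dual direction. By Lemma \ref{RadAver},
$$\sum_{i=1}^n \lambda_i\, e_i\otimes\cdots\otimes e_i = \int_0^1 \Big(\sum_{i=1}^n \mathrm{sign}(\lambda_i)|\lambda_i|^{1/k} r_i(t) e_i\Big)\otimes\cdots\otimes\Big(\sum_{i=1}^n |\lambda_i|^{1/k} r_i(t) e_i\Big)\,\rd t,$$
with the sign factors placed on the first tensor slot and $|\lambda_i|^{1/k}$ on each of the $k$ slots, so that on the diagonal $i_1=\cdots=i_k=i$ one recovers $\mathrm{sign}(\lambda_i)|\lambda_i|=\lambda_i$, while off-diagonal tensors are killed by the orthogonality property of $r_n$ in Lemma \ref{LemaAG}. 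I would then pair both sides with the continuous linear functional $\widetilde{\phi}$ on $\widehat{\bigotimes}_{\pi,k}\ell_p$ canonically associated to $\phi$ (with $\|\widetilde{\phi}\|=\|\phi\|$): the left side becomes $\sum_i\lambda_ia_i$ and the right side becomes $\int_0^1 \phi(y_1(t),\dots,y_k(t))\,\rd t$ for the natural vectors $y_j(t)\in\ell_p$.

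Because $|r_i(t)|=1$ for every $t$, each $y_j(t)$ has $\ell_p$-norm equal to $(\sum_i|\lambda_i|^{p/k})^{1/p}=\|\lambda\|_{p/k}^{1/k}$, independently of $t$. The $k$-linear continuity of $\phi$ then bounds the integrand by $\|\phi\|\,\|\lambda\|_{p/k}$, and integration over $[0,1]$ yields the desired estimate. I do not anticipate a substantive obstacle: the core computation is exactly the one from Theorem \ref{Tensordiagonal}(1), and the only care needed is the bookkeeping of signs, which must sit on precisely one of the $k$ Rademacher-averaged factors so that the diagonal contributions reproduce the coefficients $\lambda_i$ rather than their absolute values.
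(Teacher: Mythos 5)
Your argument is correct. Note first that the paper gives no proof of this lemma at all: it is quoted from Zalduendo's article, and the authors explicitly decline to reproduce the argument. What you have written is therefore a self-contained proof of a cited result, and it is essentially a repackaging of machinery already present in the paper: the inequality $\bigl|\sum_i\lambda_i a_i\bigr|\leq\|\phi\|\,\|\lambda\|_{p/k}$ is exactly the upper estimate $\pi\bigl(\sum_i\lambda_i e_i\otimes\cdots\otimes e_i\bigr)\leq\|\lambda\|_{p/k}$ from the first half of Theorem \ref{Tensordiagonal}(1), paired with $|\langle u,\phi\rangle|\leq\|\phi\|\,\pi(u)$; combined with the duality $(\ell_{p/k})^*=\ell_{p/(p-k)}$ (the paper's Lemma 3.1) this is precisely the route the authors gesture at when they say the two proofs share the same ideas. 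Two small remarks. First, the detour through $\widehat{\bigotimes}_{\pi,k}\ell_p$ is unnecessary: since $\lambda$ is finitely supported you can expand $\phi(y_1(t),\dots,y_k(t))$ directly by multilinearity and kill the off-diagonal terms with Lemma \ref{LemaAG}, which gives $\int_0^1\phi(y_1(t),\dots,y_k(t))\,\rd t=\sum_i\lambda_i a_i$ without ever mentioning tensor norms. Second, the generalized Rademacher functions take values in the $k$-th roots of unity, so for $k\geq 3$ the vectors $y_j(t)$ are genuinely complex and you need $\phi$ to act on the complexification of $\ell_p$ (with control of the norm) if the ambient scalars are real; this caveat applies equally to the paper's own use of Lemma \ref{RadAver} in Theorem \ref{Tensordiagonal}, so it is not a defect specific to your argument, but it is worth a sentence.
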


From this result, it can be shown as well that the space of
orthogonally additive polynomials is isometrically isomorphic to
$\ell_{p/p-k}$ if $k<p$. We don't repeat the proof since the ideas
are essentially the same, however the proof presented here is
self-contained.

\begin{Obs}
This method will also be valid for $1\leq p\leq k$ since in this
case, trivially $(\phi(e_n,\dots,e_n))\in \ell_\infty$. It is
shown in \cite{Zalduendo} that this is the best characterization.
\end{Obs}


\end{document}